\newtheorem{thm}{Theorem}
\newtheorem{lem}[thm]{Lemma}
\newtheorem{cor}[thm]{Corollary}
\newtheorem{fact}{Fact}
\newtheorem{remark}{Remark}
\newtheorem{claim}{Claim}
\theoremstyle{definition}
\renewcommand\proofname{\it Proof}
\begin{document}

\title{\bf The signless Laplacian spectral radius of graphs with no intersecting triangles}
\author{{Yanhua Zhao, Xueyi Huang\footnote{Corresponding author. }\setcounter{footnote}{-1}\footnote{\emph{Email address:} huangxymath@163.com.} \ and Hangtian Guo}\\[2mm]
\small Department of Mathematics, East China University of Science and Technology, \\
\small  Shanghai 200237, P.R. China}


\date{}
\maketitle
{\flushleft\large\bf Abstract}  Let $F_k$ denote the $k$-fan consisting of $k$ triangles which intersect in exactly one common vertex, and $S_{n,k}$ the complete split graph of order $n$ consisting of a clique on  $k$ vertices and an independent set on the remaining vertices in which each vertex of the clique is adjacent to each vertex of the independent set.  In this paper, it is shown that $S_{n,k}$ is the unique  graph attaining the maximum signless Laplacian spectral radius among all graphs of order $n$ containing no $F_k$, provided that $k\geq 2$ and $n\geq 3k^2-k-2$.
\begin{flushleft}
\textbf{Keywords:} Signless Laplacian spectral radius;  $k$-fan;  Extremal graph.
\end{flushleft}
\textbf{AMS Classification:} 05C50

\section{Introduction}
All graphs considered here are simple and undirected. Let $G $ be a graph with vertex set $V(G)$,  edge set $E(G)$ ($e(G)=|E(G)|$) and adjacency matrix $A(G)$. For any $v\in V(G)$, we denote by $N_k(v)$  the set of vertices at distance $k$ from $v$ in $G$. In particular,  $N(v)=N_1(v)$ and $d_v=|N(v)|$ are  the neighborhood and degree of $v$, respectively. The \textit{signless Laplacian matrix} of $G$ is defined as $Q(G)=D(G)+A(G)$, where $D(G)=\mathrm{diag}(d_v:v\in V(G))$. The largest eigenvalue of  $Q(G)$ (resp. $A(G)$) is called the \textit{signless Laplacian spectral radius} (resp. \textit{spectral radius}) of $G$, and denoted by $q_1(G)$.  For some interesting properties and bounds of $q_1(G)$, we refer the reader to \cite{CS,CS2,CS3,NP}.  If $M$ is a square matrix of order $n$ with only real eigenvalues, we arrange its eigenvalues in  non-increasing order $\lambda_1(M)\geq \lambda_2(M)\geq \cdots \geq \lambda_n(M)$.

For $S,T\subseteq V(G)$ with $S\cap T=\emptyset$, let $e(S,T)$ be the number of edges between $S$ and  $T$, and $G[S]$ the subgraph of $G$ induced by $S$.  As usual, we denote by  $K_n$ the complete graph on $n$ vertices, $kG$  the disjoint union of $k$ copies of $G$, and  $G\nabla H$  the graph obtained from the disjoint union $G\cup H$ by adding all edges between $G$ and $H$. In particular,  $F_k=K_1\nabla kK_2$ and  $S_{n,k}=K_k\nabla (n-k)K_1$.

Given a graph $H$, we say that $G$ is  $H$-\textit{free} if it does not contain $H$ as a subgraph.   The \textit{Tur\'{a}n number} of $H$, denoted by $ex(n,H)$, is the maximum number of edges in an $H$-free graph of order $n$.  Let $Ex(n,H)$ denote the set of $H$-free graphs of order $n$ with $ex(n,H)$ edges. To determine $ex(n,H)$ and characterize those graphs in $Ex(n,H)$ is  a fundamental problem (called Tur\'{a}n-type problem) in extremal graph theory, and the reader is referred to \cite{CF,NI5,SI} for surveys on this topic. In 1995, Erd\H{o}s, F\"{u}redi, Gould and Gunderson
\cite{EFGG} considered the Tur\'{a}n-type problem for $F_k$-free graphs, and they established  the following result.  

\begin{thm}(Erd\H{o}s, F\"{u}redi, Gould and Gunderson, \cite{EFGG})\label{thm-1}
 For $k \geq 1$ and $n \geq 50k^2$, we have
$$
ex(n,F_k)=\Big\lfloor\frac{n^2}{4}\Big\rfloor+\left\{
  \begin{array}{ll}
    k^2-k, & \mbox{if $k$ is odd;} \\
    k^2-\frac{3}{2}k, & \mbox{if $k$ is even.}
  \end{array}
\right.
$$
Furthermore, the number of edges is best possible.
If $k$ ($n \geq 4k-1$) is odd, then the unique extremal graph is constructed by taking a complete equi-bipartite graph  and embedding two vertex disjoint copies of $K_k$ in one side; if $k$ ($n \geq 4k-3$) is even, then the extremal graph is constructed by taking a complete equi-bipartite graph and
embedding a graph with $2k-1 $ vertices, $ k^2-\frac{3k}{2}$ edges and maximum degree $k-1 $ in one side. 
\end{thm}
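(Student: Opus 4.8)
The plan is to prove both directions of the formula, and throughout I would lean on the elementary reformulation of the forbidden configuration: since $F_k=K_1\nabla kK_2$, a graph $G$ contains $F_k$ if and only if some vertex $c$ admits a matching of size $k$ inside $G[N(c)]$; equivalently, $G$ is $F_k$-free iff $\nu(G[N(c)])\le k-1$ for every $c$, where $\nu$ denotes the matching number.

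\textbf{Lower bound (construction).} First I would verify the two advertised extremal graphs and count their edges. Start from $K_{\lceil n/2\rceil,\lfloor n/2\rfloor}$ with parts $A$ (large) and $B$, contributing $\lfloor n^2/4\rfloor$ edges, and embed a graph $H$ on a few vertices of $A$ while keeping $B$ independent. For $c\in B$ one has $N(c)=A$, so the forbidden-matching criterion reads $\nu(H)\le k-1$; for $c\in A$ one checks that $G[N(c)]$ is $H[N_H(c)]$ completely joined to the independent set $B$, whose matching number equals $\deg_H(c)$, so the criterion reads $\Delta(H)\le k-1$. Thus the construction is $F_k$-free exactly when $\Delta(H)\le k-1$ and $\nu(H)\le k-1$, and the number of surplus edges is $e(H)$. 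Maximizing $e(H)$ under these two constraints is a finite extremal problem (Chv\'atal--Hanson) whose answer is parity dependent: for odd $k$ the optimum $k^2-k$ is attained by $H=2K_k$ (here $\nu=k-1$), while for even $k$ the disjoint cliques are forbidden since $\nu(2K_k)=k>k-1$, and the optimum drops to $k^2-\tfrac32k$, attained by a maximum-degree-$(k-1)$ graph on $2k-1$ vertices. This yields the claimed lower bounds together with the stated extremal graphs.

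\textbf{Upper bound.} Let $G$ be an $F_k$-free graph of order $n\ge 50k^2$ with the maximum number of edges. Since $\chi(F_k)=3$, Erd\H{o}s--Stone--Simonovits gives $e(G)=(1+o(1))n^2/4$, and the corresponding stability theorem places $G$ within $o(n^2)$ edges of a complete balanced bipartite graph. Fix a maximum cut $(A,B)$; maximality forces $|N(v)\cap A|\le|N(v)\cap B|$ for every $v\in A$ (and symmetrically), so each vertex sends at least half of its edges across and the within-part edge sets are comparatively small. Combining the neighborhood criterion with the Erd\H{o}s--Gallai bound for graphs of matching number $\le k-1$ shows that all but a bounded set $S$ of $O(k)$ vertices are joined to almost the whole opposite side, and that every within-part edge is incident to $S$. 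A local exchange argument then shows that deleting a within-part edge and completing any missing cross-edges never decreases $e(G)$ unless all cross-edges are already present; hence in the extremal graph $A$--$B$ is complete bipartite and the surplus edges lie inside one part, on an $O(k)$-vertex set inducing some $H$ with $\Delta(H)\le k-1$ and $\nu(H)\le k-1$. We are returned to precisely the finite optimization of the construction, whose parity-dependent value gives $ex(n,F_k)$ and the description of the extremal graphs.

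\textbf{Main obstacle.} The genuinely delicate part is the exact localization in the upper bound: upgrading the $o(n^2)$-approximate bipartite structure to the assertion that, in a true extremal graph, $A$--$B$ is complete and the surplus edges occupy only $O(k)$ vertices of a single side. This is exactly where the hypothesis $n\ge 50k^2$ is consumed, since one needs the $\Theta(n)$ cross-edges available to each vertex to strictly outweigh the at most $k-1$ within-part edges it could otherwise retain; handling the finitely many exceptional vertices of $S$ and proving that no configuration there beats the extremal $H$ is the subtle step. By contrast, the clean finite problem $\max\{e(H):\Delta(H)\le k-1,\ \nu(H)\le k-1\}$, with its odd/even split, is routine once it has been isolated.
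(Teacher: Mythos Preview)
The paper does not prove this theorem at all: Theorem~\ref{thm-1} is quoted from Erd\H{o}s, F\"uredi, Gould and Gunderson \cite{EFGG} purely as background and motivation, and no argument for it appears anywhere in the paper. There is consequently nothing to compare your proposal against.

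As an independent sketch of the Erd\H{o}s--F\"uredi--Gould--Gunderson argument, your outline is broadly on the right track (stability plus local edge-exchange to force a complete balanced bipartite backbone, then a finite Chv\'atal--Hanson-type optimization for the surplus), and the lower-bound analysis is clean. One point that is glossed over in your upper bound deserves attention: after you establish that the cross-part is complete, you assert that ``the surplus edges lie inside \emph{one} part''. A priori both $G[A]$ and $G[B]$ may carry edges, and the neighborhood-matching criterion then couples the two sides: for $c\in B$ one gets $\nu(H_A)+\deg_{H_B}(c)\le k-1$, and symmetrically, so the finite problem is actually to maximize $e(H_A)+e(H_B)$ under $\nu(H_A)+\Delta(H_B)\le k-1$ and $\nu(H_B)+\Delta(H_A)\le k-1$. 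Showing that the optimum concentrates all surplus edges on a single side is an extra (short, but not automatic) step that your write-up should include.
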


In extremal spectral graph theory, the Brualdi-Solheid-Tur\'{a}n type problem proposed by Nikiforov \cite{NI3}  asks for the maximum spectral radius of an $H$-free graph of order $n$. Up to now, this problem has been studied for various kinds of $H$ such as the complete graph \cite{WI}, the complete bipartite graph \cite{BG,NI4},  and the cycles or paths of specified  length \cite{NI2,NI3,ZW,ZL,GH}. 
For the signless Laplacian spectral radius, the Brualdi-Solheid-Tur\'{a}n type problem has also been investigated for those graph classes mentioned above \cite{FNP,NY,NY1,YU}.

Very recently, Cioab\u{a}, Feng, Tait and Zhang \cite{CFTZ} studied the Brualdi-Solheid-Tur\'{a}n type problem for graphs contatining no $F_k$, and gave the following result.

\begin{thm}(Cioab\u{a}, Feng, Tait and Zhang \cite{CFTZ})\label{thm-2}
Let $G$ be a $F_k$-free graph of order $n$. For sufficiently large $n$, if $G$ has the maximal spectral radius, then
$$G \in Ex(n, F_k),$$
where $Ex(n, F_k)$ consists of the extremal graphs given in Theorem \ref{thm-1}.
\end{thm}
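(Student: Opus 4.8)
The plan is to follow the standard two-stage paradigm of spectral extremal graph theory: first prove a stability result showing that any spectral maximiser must be structurally close to the complete equi-bipartite graph, and then upgrade this approximate structure to the exact extremal configuration of Theorem~\ref{thm-1}. Let $G^*$ be an $F_k$-free graph of order $n$ of maximum adjacency spectral radius $\lambda:=\lambda_1(A(G^*))$, and let $\mathbf{x}$ be its Perron eigenvector, normalised so that $\max_u x_u=x_z=1$ for some vertex $z$. Two facts will be used throughout. First, the lower bound $\lambda\geq\sqrt{\lfloor n/2\rfloor\lceil n/2\rceil}\geq \tfrac12\sqrt{n^2-1}$, which holds because $K_{\lfloor n/2\rfloor,\lceil n/2\rceil}$ is $F_k$-free and $G^*$ maximises $\lambda$. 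Second, the local reformulation of $F_k$-freeness: $G^*$ contains $F_k$ if and only if some vertex $v$ has a matching of size $k$ inside $N(v)$, so that $F_k$-freeness is equivalent to $G^*[N(v)]$ having matching number at most $k-1$ for every $v$. By the Erd\H{o}s--Gallai theorem this yields $e(G^*[N(v)])=O(k\,d_v)$ and, more usefully, a vertex cover of $G^*[N(v)]$ of size at most $2k-2$.

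Starting from the walk identity $\lambda^2 x_z=d_z x_z+\sum_{w\neq z}c(z,w)\,x_w$, where $c(z,w)=|N(z)\cap N(w)|$, together with the elementary consequence $d_z\geq\lambda\geq\tfrac12\sqrt{n^2-1}$, I would feed in the edge bound $e(G^*[N(z)])=O(k\,d_z)$ to control the second-neighbourhood contribution and deduce that $\lambda=\tfrac n2+O(1)$ and that almost all of the eigenvector mass sits on a near-balanced pair of parts. The heart of the stability stage is to prove that $G^*$ is approximately bipartite with parts $A,B$ of sizes $\tfrac n2+O(k)$. I would obtain this by bounding the number of internal edges $e(G^*[A])+e(G^*[B])$ from above: the local matching condition caps how many edges can accumulate around any single vertex, and a Rayleigh-quotient comparison against $\lambda\geq\tfrac12\sqrt{n^2-1}$ then excludes any graph that is far from complete bipartite.

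With the approximate bipartition $(A,B)$ in hand, the next step is to make the structure exact. I would show that every vertex is adjacent to almost all of the opposite side, for otherwise relocating a poorly-joined vertex $u$ (deleting its edges inside its own part and joining it completely to the opposite part) strictly increases the Rayleigh quotient while preserving $F_k$-freeness, contradicting the maximality of $\lambda$. This forces $G^*[A,B]$ to be complete, so that $G^*$ is $K_{|A|,|B|}$ together with a graph $H$ of bounded order embedded in a single side. Because the opposite side is complete to the embedding side, every embedded vertex $v$ sees all of $H$ inside $N(v)$, so the local matching condition forces $H$ to have maximum degree at most $k-1$ and matching number bounded exactly as in Theorem~\ref{thm-1}; in particular the embedding lives on one side only and the other side is independent.

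Finally, among all graphs consisting of $K_{a,b}$ with $a+b=n$ plus an admissible embedding $H$ in one side, I would maximise $\lambda$ by a quotient-matrix and perturbation analysis: balancing $(a,b)$ to $(\lfloor n/2\rfloor,\lceil n/2\rceil)$ and then making $e(H)$ as large as possible under the degree constraint both increase $\lambda$, and the optimum must coincide with the edge-maximal embedding, namely two disjoint copies of $K_k$ when $k$ is odd and the prescribed $(2k-1)$-vertex, $(k^2-\tfrac32 k)$-edge, maximum-degree-$(k-1)$ graph when $k$ is even. I expect the main obstacle to lie precisely in this exact-structure stage: quantitatively controlling the $O(k)$ exceptional vertices, and proving that neither an unbalanced bipartition nor an embedding split across both sides can beat the balanced, single-side configuration, require sharp eigenvector estimates rather than mere order-of-magnitude bounds, and the parity-dependent optimum of $H$ must be matched exactly --- not merely asymptotically --- to the edge-extremal graphs of Theorem~\ref{thm-1}.
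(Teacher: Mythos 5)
First, a point of calibration: the paper you are commenting on does not prove Theorem~\ref{thm-2} at all --- it is quoted from Cioab\u{a}, Feng, Tait and Zhang \cite{CFTZ} --- so the only meaningful comparison is with the proof in \cite{CFTZ}. Your outline does reconstruct their general strategy (local reformulation of $F_k$-freeness as a matching-number bound on each neighbourhood, a lower bound $\lambda\geq\sqrt{\lfloor n^2/4\rfloor}$ from the bipartite construction, a stability stage, then an exact-structure stage). But as written it is a programme, not a proof, and it stops precisely at the two places where the actual argument is hard. The stability step --- ``a Rayleigh-quotient comparison against $\lambda\geq\frac12\sqrt{n^2-1}$ excludes any graph far from complete bipartite'' --- is asserted, not argued. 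What \cite{CFTZ} actually do is convert the spectral bound into an edge-count bound $e(G^*)\geq n^2/4-O(k^2)$ via the walk identity and the neighbourhood matching bound, invoke Erd\H{o}s--Simonovits stability for the $3$-chromatic graph $F_k$, and crucially perform a degree-regularisation step eliminating vertices of degree much below $n/2$; without that regularisation your later claim that ``every vertex is adjacent to almost all of the opposite side'' is simply false for a handful of low-degree vertices, and controlling exactly those exceptional vertices is where the sharp eigenvector estimates ($x_u=1-O(k/n)$ uniformly) enter.

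There is also a concrete error in your exact-structure stage. The relocation switch --- delete $u$'s edges inside its own part $A$ and join $u$ completely to $B$ --- does \emph{not} obviously preserve $F_k$-freeness: if $w\in B$ has edges inside $B$ (the embedded graph $H$ lives there), then after the switch $u\in N(w)$ and $u$ is adjacent to all of $N(w)\cap B$, so the matching number of $G^*[N(w)]$ can jump from $k-1$ to $k$, creating an $F_k$ centred at $w$. This is not a technicality: it is exactly the mechanism behind the parity dependence of the extremal graph (two disjoint copies of $K_k$ have matching number $k-1$ only when $k$ is odd), so any switching argument must do the same parity-sensitive bookkeeping that Theorem~\ref{thm-1} encodes, and \cite{CFTZ} handle this with weighted (eigenvector-entry) edge exchanges rather than wholesale relocation. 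Likewise your final optimisation over $K_{a,b}$ plus an embedding $H$ is not a routine quotient-matrix computation, since the candidate graphs admit no equitable partition once $H$ varies; ruling out unbalanced $(a,b)$ and embeddings split across both sides requires second-order perturbation estimates that your sketch explicitly defers. In short: the skeleton matches \cite{CFTZ}, but the stability lemma, the regularisation of low-degree vertices, the $F_k$-safety of your switch, and the exact parity-matched optimisation are all genuine gaps that the proposal acknowledges without closing.
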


Inspired by the work of Cioab\u{a}, Feng, Tait and Zhang \cite{CFTZ}, in this paper, we focus on the maximum signless Laplacian spectral radius of $F_k$-free graphs, and prove that

\begin{thm}\label{thm-3}
Let $k\geq 2$ and $n\geq 3k^2-k-2$. If $G$ is a $F_k$-free graph of order $n$, then
$$q_1(G) \leq q_1(S_{n,k}),$$
with equality holding if and only if $G=S_{n,k}$.
\end{thm}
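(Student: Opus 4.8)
The plan is to take $G$ to be a graph of order $n$ attaining the maximum signless Laplacian spectral radius among all $F_k$-free graphs, write $q=q_1(G)$, and argue that $G$ must coincide with $S_{n,k}$. A standard maximality argument lets me assume $G$ is connected, so that $Q(G)$ has a positive Perron eigenvector $x$, which I normalise by $x_z=\max_u x_u=1$. The first thing I would record is a sharp lower bound on $q$. Since $S_{n,k}$ is $F_k$-free (a fan centred at a clique vertex would require a matching of size $k$ inside $S_{n-1,k-1}$, and one centred at an independent vertex a matching of size $k$ inside $K_k$, both impossible), we have $q\ge q_1(S_{n,k})$. Using the equitable partition of $S_{n,k}$ into clique and independent set, $q_1(S_{n,k})$ is the largest root of $\lambda^2-(n+2k-2)\lambda+2k(k-1)=0$; evaluating this quadratic at $n+2k-3$ gives $2k^2-4k+3-n<0$ for $n\ge 3k^2-k-2$, whence
\[
q\ge q_1(S_{n,k})>n+2k-3 .
\]

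The combinatorial engine is a local constraint on neighbourhoods. Because $G$ is $F_k$-free, for every vertex $v$ the induced graph $G[N(v)]$ contains no matching of size $k$, i.e. $\nu(G[N(v)])\le k-1$; by the Erd\H{o}s--Gallai theorem this forces
\[
e(G[N(v)])\le \binom{k-1}{2}+(k-1)\bigl(d_v-(k-1)\bigr),
\]
with equality, when $d_v$ is large, exactly when $G[N(v)]\cong S_{d_v,k-1}$. This is the quantitative shape I need: a large neighbourhood can carry only about $(k-1)d_v$ edges, arranged in split-graph fashion. In particular at most $k$ vertices can be adjacent to almost all others while keeping $G$ fan-free, since $k+1$ mutually adjacent near-dominating vertices would, in the neighbourhood of one of them, allow matching the other $k$ to distinct outside vertices and thereby create an $F_k$.

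The spectral heart is localisation via the eigen-equation $(q-d_v)x_v=\sum_{w\sim v}x_w$. Using $q>n+2k-3$ and $x_w\le 1$, I would first show the top vertex $z$ has degree $d_z=n-O_k(1)$; then, summing $(q-d_v)x_v$ over $v\in N(z)$ and rewriting the right-hand side as $\sum_w x_w\,|N(w)\cap N(z)|$, the neighbourhood edge bound controls the ``within $N(z)$'' contribution and forces almost all of the weight $\sum_{v\sim z}x_v\approx q-d_z\approx 2k-1$ onto a bounded set of high-value vertices. Iterating the same estimate on those vertices isolates a set $R$ of exactly $k$ vertices whose $x$-values are bounded below by a constant and whose degrees are $n-O_k(1)$, while every vertex outside $R$ has $x$-value $O_k(1/n)$; combined with the at-most-$k$-dominating-vertices observation, $R$ must induce a clique, each vertex of $R$ is adjacent to nearly all others, and hence $G$ lies within $O_k(1)$ edges of $S_{n,k}$.

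The final and hardest step is upgrading this approximate description to the exact equality $G=S_{n,k}$, and this is where the hypothesis $n\ge 3k^2-k-2$ is spent. I would compare $q_1(G)$ with $q_1(S_{n,k})$ directly through the Rayleigh quotient $x^{\top}Q(G)x$: any edge inside $V(G)\setminus R$, or any missing edge between $R$ and $V(G)\setminus R$, changes $x^{\top}Q(G)x$ by a quantity whose sign I can read off from the now-known orders of magnitude of the entries of $x$ (values $\Theta(1)$ on $R$, values $\Theta(1/n)$ outside). The obstacle is that $S_{n,k}$ and its close competitors differ only by $O_k(1)$ edges located among the low-value vertices, so the resulting gain or loss is of lower order and must be estimated sharply; ruling out every near-extremal configuration — verifying that relocating a vertex's neighbours cannot compensate, and that the Erd\H{o}s--Gallai equality case cannot be realised by any graph other than $S_{n,k}$ without creating an $F_k$ — is exactly what $n\ge 3k^2-k-2$ makes possible, by guaranteeing that the $\Theta(1/n)$ corrections beat the constant-size combinatorial savings. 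Carrying out this sign analysis and concluding $q_1(G)<q_1(S_{n,k})$ unless $G=S_{n,k}$ completes the proof.
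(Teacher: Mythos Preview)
Your route and the paper's diverge at the first technical step. The paper never touches the Perron eigenvector. Instead it uses Merris's bound
\[
q_1(G)\le \max_{v}\Bigl(d_v+\tfrac{1}{d_v}\sum_{w\sim v}d_w\Bigr)
\]
to turn the spectral inequality into a pure edge-counting problem at the vertex $u$ realising this maximum. The quantity $d_u+\frac{1}{d_u}\bigl(d_u+2e(G[N(u)])+e(N(u),N_2(u))\bigr)$ is then squeezed against the explicit lower bound $q_1(S_{n,k})\ge n+2k-2-\frac{2k(k-1)}{n+2k-3}$, and a short case analysis on $d_u$ (using Erd\H{o}s--Gallai in the form of Corollary~\ref{cor-1}) forces $d_u\ge (n+2k-3)/2$ together with a two-sided window for $2e(G[N(u)])+e(N(u),N_2(u))$. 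A second combinatorial step pins down $G[N(u)]$ as a spanning subgraph of $S_{d_u,k-1}$. The endgame is not a Rayleigh-quotient perturbation but a quotient-matrix trick: one embeds $G$ into $G^*=G[X]\nabla |Y|K_1$ for a suitable bipartition $X\cup Y$, inflates the diagonal of $Q(G^*)$ so that the partition becomes equitable, and reads off $q_1(G)\le\lambda_1(B_\Pi)$ from a $2\times 2$ quotient matrix whose largest eigenvalue, compared with the closed form for $q_1(S_{n,k})$, forces $d_u=n-1$ and hence $G\subseteq S_{n,k}$. Every inequality is explicit, and this is exactly where the constant $3k^2-k-2$ falls out.

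Your eigenvector-localisation plan is the natural adaptation of the Cioab\u{a}--Feng--Tait--Zhang adjacency argument, and the approximate structure (a clique $R$ of size $k$ with near-universal vertices) is plausible. But the last paragraph is a genuine gap, not a proof: you acknowledge that the competing configurations differ from $S_{n,k}$ only by $O_k(1)$ edges among vertices of weight $O_k(1/n)$, so the Rayleigh-quotient comparison is between terms of the same order, and you do not carry out the sign analysis. In the adjacency setting the analogous step already required substantial work and yielded only ``sufficiently large $n$''; there is no indication your outline would recover the sharp threshold $n\ge 3k^2-k-2$, and the sentence ``this is exactly what $n\ge 3k^2-k-2$ makes possible'' is an assertion, not an argument. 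The paper sidesteps this difficulty entirely via the Merris bound and the equitable-quotient comparison, which is the main idea you are missing.
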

\begin{remark}
\emph{
It is worth mentioning that the extremal graphs in Theorem \ref{thm-3} are not the same as those  of Theorem \ref{thm-2}.  In addition, for $k=1$, i.e., $G$ is triangle-free,  from \cite[Theorem 1.3]{HJZ} we know that $q_1(G)\leq n$ with equality holding if and only if $G$ is a complete bipartite graph. 
}
\end{remark}

\section{Proof of Theorem \ref{thm-3}}

First of all, we list some  lemmas, which are crucial  for the proof of Theorem \ref{thm-3}.

Let $M$ be a real $n\times n$ matrix, and let $\mathcal{N}=\{1,2,\ldots,n\}$. Given a partition $\Pi:\mathcal{N}=\mathcal{N}_1\cup \mathcal{N}_2\cup \cdots \cup \mathcal{N}_k$,  the matrix $M$ can be correspondingly partitioned as
$$
M=\left(\begin{array}{ccccccc}
M_{1,1}&M_{1,2}&\cdots &M_{1,k}\\
M_{2,1}&M_{2,2}&\cdots &M_{2,k}\\
\vdots& \vdots& \ddots& \vdots\\
M_{k,1}&M_{k,2}&\cdots &M_{k,k}\\
\end{array}\right).
$$
The \textit{quotient matrix} of $M$ with respect to $\Pi$ is defined as the $k\times k$ matrix $B_\Pi=(b_{i,j})_{i,j=1}^k$ where $b_{i,j}$ is the  average value of all row sums of  $M_{i,j}$.
The partition $\Pi$ is called \textit{equitable} if each block $M_{i,j}$ of $M$ has constant row sum $b_{i,j}$.
Also, we say that the quotient matrix $B_\Pi$ is \textit{equitable} if $\Pi$ is an  equitable partition of $M$.

\begin{lem}(Brouwer and Haemers \cite[p. 30]{BH}; Godsil and Royle \cite[pp. 196--198]{GR})\label{lem-1}
Let $M$ be a real symmetric matrix, and let $B_\Pi$ be an equitable quotient matrix of $M$. Then the eigenvalues of  $B_\Pi$ are also eigenvalues of $M$. Furthermore, if $M$ is nonnegative and irreducible, then 
$$\lambda_1(M) = \lambda_1(B_\Pi).$$
\end{lem}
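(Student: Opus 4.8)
The plan is to prove the lemma through the \emph{characteristic matrix} of the partition $\Pi$. Assuming each part $\mathcal{N}_i$ is nonempty, let $S$ be the $n\times k$ matrix whose $i$-th column is the indicator vector of $\mathcal{N}_i$; then the columns of $S$ are nonzero with pairwise disjoint supports, so $S$ has full column rank and $S^{\top}S=D:=\mathrm{diag}(|\mathcal{N}_1|,\ldots,|\mathcal{N}_k|)$ is invertible. The first step is to observe that equitability of $\Pi$ is \emph{exactly} the matrix identity
$$MS=SB_\Pi.$$
Indeed, for $v\in\mathcal{N}_i$ the $(v,j)$-entry of $MS$ is the sum over $w\in\mathcal{N}_j$ of $M_{v,w}$, i.e.\ the row sum of the block $M_{i,j}$ in row $v$, which equals $b_{i,j}$ precisely because $\Pi$ is equitable; and $b_{i,j}$ is also the $(v,j)$-entry of $SB_\Pi$.

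Given this identity, the first assertion is immediate: if $B_\Pi x=\mu x$ with $x\neq 0$, then $M(Sx)=(MS)x=S(B_\Pi x)=\mu(Sx)$, and $Sx\neq 0$ by injectivity of $S$, so $\mu$ is an eigenvalue of $M$ with eigenvector $Sx$. In particular, since $M$ is symmetric its eigenvalues are real, hence so is every eigenvalue of $B_\Pi$ and $\lambda_1(B_\Pi)$ is well defined. To compare $\lambda_1(B_\Pi)$ with $\lambda_1(M)$ I would symmetrize: put $S_0=SD^{-1/2}$, so that $S_0^{\top}S_0=I_k$, and $\widetilde{B}=D^{1/2}B_\Pi D^{-1/2}$. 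Using symmetry of $M$ one checks $|\mathcal{N}_i|\,b_{i,j}=\sum_{v\in\mathcal{N}_i,\,w\in\mathcal{N}_j}M_{v,w}=|\mathcal{N}_j|\,b_{j,i}$, whence $\widetilde{B}$ is symmetric and similar to $B_\Pi$, and $MS_0=S_0\widetilde{B}$. Thus the column space $U$ of $S_0$ (the space of vectors constant on each part) is $M$-invariant, $\widetilde{B}=S_0^{\top}MS_0$ represents $M|_U$ in an orthonormal basis, and the Rayleigh quotient gives
$$\lambda_1(B_\Pi)=\lambda_1(\widetilde{B})=\max_{0\neq u\in U}\frac{u^{\top}Mu}{u^{\top}u}\leq\lambda_1(M).$$

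It remains to prove the reverse inequality under the hypothesis that $M$ is nonnegative and irreducible, which is where the substance of the argument lies. By the Perron--Frobenius theorem, $\lambda_1(M)$ is then a \emph{simple} eigenvalue of $M$ with a strictly positive eigenvector $\mathbf{v}$. The crux is to show $\mathbf{v}\in U$. Since $M$ is symmetric and $U$ is $M$-invariant, so is $U^{\perp}$; writing $\mathbf{v}=\mathbf{v}_U+\mathbf{v}_{\perp}$ with $\mathbf{v}_U\in U$ and $\mathbf{v}_{\perp}\in U^{\perp}$, and matching the $U$- and $U^{\perp}$-components of $M\mathbf{v}=\lambda_1(M)\mathbf{v}$, we get $M\mathbf{v}_U=\lambda_1(M)\mathbf{v}_U$. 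Moreover $\mathbf{v}_U\neq 0$, since its entries are the (positive) averages of $\mathbf{v}$ over the parts, so $\mathbf{v}_U$ is a nonzero eigenvector for $\lambda_1(M)$. By simplicity of this eigenvalue, $\mathbf{v}_U$ is a scalar multiple of $\mathbf{v}$, forcing $\mathbf{v}\in U$. Writing $\mathbf{v}=Sx$, the identity $MS=SB_\Pi$ then gives $S\bigl(B_\Pi x-\lambda_1(M)x\bigr)=0$, and injectivity of $S$ yields $B_\Pi x=\lambda_1(M)x$; so $\lambda_1(M)$ is an eigenvalue of $B_\Pi$, and with the previous inequality we conclude $\lambda_1(B_\Pi)=\lambda_1(M)$. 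I expect the main obstacle to be exactly this last step—showing that the Perron vector is constant on the parts—because it is the only place where nonnegativity and irreducibility, via the simplicity of the top eigenvalue, are genuinely used.
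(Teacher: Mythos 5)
Your proof is correct and complete; note, however, that the paper itself gives no proof of this lemma---it is quoted with citations to Brouwer--Haemers and Godsil--Royle---so the relevant comparison is with the standard textbook argument. Your treatment of the first assertion (equitability as the identity $MS=SB_\Pi$, then lifting eigenvectors via $x\mapsto Sx$) is exactly the classical one. For the Perron part you deviate in an interesting way: the usual textbook route goes \emph{upward}, observing that $B_\Pi$ is itself nonnegative and inherits irreducibility from $M$ (since $b_{i,j}>0$ precisely when the block $M_{i,j}\neq 0$, and walks in $M$ project to walks between parts), taking a positive Perron eigenvector $x$ of $B_\Pi$, lifting it to the positive vector $Sx$, and invoking the fact that for an irreducible nonnegative matrix only the spectral radius admits a nonnegative eigenvector. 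You instead go \emph{downward}: you project the Perron vector $\mathbf{v}$ of $M$ onto the subspace $U$ of part-constant vectors, use the $M$-invariance of $U$ and $U^{\perp}$ to see that $\mathbf{v}_U$ is again a $\lambda_1(M)$-eigenvector, note $\mathbf{v}_U\neq 0$ because its entries are part-averages of a positive vector, and conclude $\mathbf{v}\in U$ from simplicity. This is a genuine simplification in one respect---you never have to verify that $B_\Pi$ inherits irreducibility, and Perron--Frobenius is applied only to $M$---at the cost of using the full strength of simplicity of $\lambda_1(M)$ rather than just the nonnegative-eigenvector characterization. One minor redundancy: the symmetrization paragraph with $S_0=SD^{-1/2}$ and $\widetilde{B}=D^{1/2}B_\Pi D^{-1/2}$ is dispensable, since the inequality $\lambda_1(B_\Pi)\leq\lambda_1(M)$ and the realness of the eigenvalues of $B_\Pi$ both follow immediately from your first assertion that every eigenvalue of $B_\Pi$ is an eigenvalue of the symmetric matrix $M$; still, the computation there is correct, and the identity $|\mathcal{N}_i|\,b_{i,j}=|\mathcal{N}_j|\,b_{j,i}$ is verified properly.
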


\begin{lem}(Horn and Johnson \cite[Corollary 8.1.19]{HJ})\label{lem-2}
If $M_1$ and $M_2$ are two nonnegative symmetric  matrices such that  $M_1-M_2$ is nonnegative, then
$$\lambda_1(M_1)\geq \lambda_2(M_2).$$
\end{lem}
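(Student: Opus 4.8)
The plan is to deduce the inequality from the Rayleigh--Ritz variational characterization of the largest eigenvalue together with the Perron--Frobenius theory for nonnegative matrices. I will in fact establish the sharper (and standard Horn--Johnson) form $\lambda_1(M_1)\ge \lambda_1(M_2)$; since $\lambda_1(M_2)\ge \lambda_2(M_2)$, this immediately yields the displayed bound. The guiding idea is that for a symmetric matrix $M$ one has $\lambda_1(M)=\max_{\|y\|=1} y^{\top} M y$, with the maximum attained at a top eigenvector; the trick is to evaluate the Rayleigh quotient of $M_1$ not at its own maximizer, but at a carefully chosen eigenvector of $M_2$.

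First I would recall that $M_2$ is symmetric and entrywise nonnegative, so by the Perron--Frobenius theorem its spectral radius is an eigenvalue equal to $\lambda_1(M_2)$, and it admits a corresponding eigenvector $x$ that can be taken entrywise nonnegative with $\|x\|=1$; that is, $M_2 x=\lambda_1(M_2)\,x$ and $x\ge 0$. Next, using $x$ as a test vector in the variational formula for $M_1$, I would write
$$\lambda_1(M_1)=\max_{\|y\|=1} y^{\top} M_1 y\ge x^{\top} M_1 x = x^{\top} M_2 x + x^{\top}(M_1-M_2)x = \lambda_1(M_2)+x^{\top}(M_1-M_2)x.$$
Finally I would observe that the last quadratic form is nonnegative: writing $M_1-M_2=(c_{ij})$ with every $c_{ij}\ge 0$ and $x=(x_i)$ with every $x_i\ge 0$, we have $x^{\top}(M_1-M_2)x=\sum_{i,j} c_{ij}\,x_i x_j\ge 0$ term by term. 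Combining the two displays gives $\lambda_1(M_1)\ge \lambda_1(M_2)$, as required.

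The main obstacle --- really the only nontrivial input --- is the existence of an entrywise nonnegative eigenvector for the top eigenvalue of $M_2$. This is precisely what forces the argument through Perron--Frobenius rather than through an arbitrary spectral decomposition: the concluding step $x^{\top}(M_1-M_2)x\ge 0$ relies crucially on the sign condition $x\ge 0$, and an unsigned eigenvector of $M_2$ would leave the cross terms uncontrolled. Some care is needed because $M_2$ is not assumed irreducible, so one cannot hope for a strictly positive eigenvector; one must instead invoke the general form of Perron--Frobenius valid for all nonnegative matrices, which still guarantees that the spectral radius is attained by some nonnegative eigenvector. When reducibility is a concern, such a vector can be produced as a limit of the positive Perron vectors of the irreducible perturbations $M_2+\varepsilon J$ as $\varepsilon\downarrow 0$, where $J$ is the all-ones matrix.
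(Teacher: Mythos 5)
Your proof is correct, and it is genuinely self-contained where the paper is not: the paper offers no argument for this lemma at all, simply citing Horn and Johnson \cite[Corollary 8.1.19]{HJ}, whose proof runs through entrywise monotonicity of the spectral radius for general (not necessarily symmetric) matrices, via domination of powers $|B^m|\le A^m$ and the spectral radius formula. You instead exploit symmetry directly: Rayleigh--Ritz plus a nonnegative top eigenvector of $M_2$, giving $\lambda_1(M_1)\ge x^{\top}M_2x+x^{\top}(M_1-M_2)x\ge\lambda_1(M_2)$. Two remarks. First, you can bypass Perron--Frobenius (and hence your entire reducibility discussion with the $M_2+\varepsilon J$ perturbation) with the absolute-value trick: if $y$ is any unit top eigenvector of $M_2$, set $x=|y|$ entrywise; then $x\ge 0$, $\|x\|=1$, and $x^{\top}M_2x=\sum_{i,j}(M_2)_{ij}|y_i||y_j|\ge\sum_{i,j}(M_2)_{ij}y_iy_j=\lambda_1(M_2)$ since $M_2$ is entrywise nonnegative --- and that is all your argument needs, since $x$ need not be an eigenvector, only a test vector achieving at least $\lambda_1(M_2)$ in the quadratic form. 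Second, your observation that the sharp form is $\lambda_1(M_1)\ge\lambda_1(M_2)$ is apt: the $\lambda_2(M_2)$ in the paper's statement is evidently a typographical slip for $\lambda_1(M_2)$, and indeed the application in the proof of Theorem \ref{thm-3} (the chain $q_1(G)\le q_1(G^*)\le\lambda_1(Q^*)$) uses precisely the $\lambda_1$-versus-$\lambda_1$ comparison that you prove, so your stronger conclusion is not merely a bonus but the form actually needed downstream.
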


The following  bound of $\rho_ Q(G)$ can be traced back to Merris \cite{Me}.

\begin{lem}\label{lem-3}
For every graph $G$, we have $$\rho_ Q(G)\leq \max\Big\{d_v+ \frac{1}{d_v}\sum\limits_{w\in N(v)}d_w: v\in V(G)\Big\}.$$
If $G$ is connected, equality holds if and only if $G$ is regular or semiregular bipartite.
\end{lem}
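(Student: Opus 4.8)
The plan is to replace $Q=Q(G)=D+A$ (with $D=D(G)$, $A=A(G)$) by a \emph{similar nonnegative} matrix whose row sums are exactly the quantities appearing in the bound, so that the inequality reduces to the elementary fact that the spectral radius of a nonnegative matrix is at most its largest row sum. Assuming first that $G$ has no isolated vertices (these contribute only the eigenvalue $0$ and never enter the maximum), $D$ is invertible, and I would set $B=D^{-1}QD$. This $B$ is entrywise nonnegative with $B_{uv}=\tfrac{d_v}{d_u}Q_{uv}$; it is similar to the positive semidefinite matrix $Q$, hence has real nonnegative spectrum with $\rho_Q(G)=\lambda_1(Q)=\rho(B)$. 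A one-line computation gives that the $u$-th row sum of $B$ is
$$\frac{1}{d_u}\Big(d_u^2+\sum_{w\in N(u)}d_w\Big)=d_u+\frac{1}{d_u}\sum_{w\in N(u)}d_w=:r_u.$$
Since $\rho(B)\le\|B\|_\infty=\max_u r_u$ for a nonnegative $B$, the asserted bound follows at once.

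For the equality statement I would assume $G$ connected, making $Q$ and $B$ irreducible with a positive Perron eigenvector $y$, $By=\rho y$. Evaluating the eigen-equation at a coordinate $u^\ast$ where $y$ is largest yields $\rho y_{u^\ast}=\sum_v B_{u^\ast v}y_v\le r_{u^\ast}y_{u^\ast}\le R\,y_{u^\ast}$, where $R=\max_u r_u$. If $\rho=R$ both inequalities are equalities, forcing $y_v=y_{u^\ast}$ for every $v\sim u^\ast$, and the same reasoning at every maximal coordinate shows that the set where $y$ attains its maximum is closed under adjacency, whence $y$ is constant by connectivity. Then $\mathbf 1$ is an eigenvector and every $r_u$ equals $R$; conversely, equal row sums make $\mathbf 1$ a positive eigenvector, giving $\rho=R$. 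Thus equality holds precisely when $d_v+m_v$, with $m_v=\tfrac{1}{d_v}\sum_{w\in N(v)}d_w$, is the same constant $R$ for every $v$; equivalently the degree vector $d=(d_v)$ is a Perron eigenvector of $Q$, i.e. $Qd=Rd$.

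The remaining, and main, task is the purely combinatorial identification of the graphs with $d_v+m_v\equiv R$; this is where the real work lies. If $\Delta=\delta$ the graph is regular and the condition is immediate. Otherwise, writing the relation as $m_v=R-d_v$ and reading it at a vertex of degree $\Delta$ (whose neighbors have degree $\ge\delta$) gives $R-\Delta\ge\delta$, while reading it at a vertex of degree $\delta$ (whose neighbors have degree $\le\Delta$) gives $R-\delta\le\Delta$; together these pin down $R=\Delta+\delta$. Consequently a $\Delta$-vertex has average neighbor degree $\delta$ and hence \emph{all} its neighbors of degree $\delta$, and symmetrically every neighbor of a $\delta$-vertex has degree $\Delta$. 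A breadth-first search from any vertex, combined with connectivity, then forces every vertex to have degree $\Delta$ or $\delta$ and every edge to join a $\Delta$-vertex with a $\delta$-vertex, so $G$ is bipartite with parts $\{d_v=\Delta\}$ and $\{d_v=\delta\}$ and is semiregular. The converse (that regular and semiregular bipartite graphs satisfy $d_v+m_v\equiv R$) is a direct check, completing the proof. The only genuinely delicate point is this degree-propagation argument, ensuring that no intermediate degrees survive and that the two degree classes form the bipartition; everything else is routine linear algebra.
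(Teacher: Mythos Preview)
The paper does not supply its own proof of this lemma; it is stated as a known result attributed to Merris, with no argument given. So there is nothing in the paper to compare against.

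Your proof is correct and is essentially the standard one. Conjugating $Q$ by the diagonal degree matrix $D$ to obtain a nonnegative matrix $B=D^{-1}QD$ whose $u$-th row sum is exactly $d_u+\tfrac{1}{d_u}\sum_{w\in N(u)}d_w$, and then invoking $\rho(B)\le\|B\|_\infty$, gives the inequality immediately. Your treatment of the equality case is also sound: the Perron--Frobenius rigidity argument forces all row sums of $B$ to coincide, and the degree-propagation step (pinning down $R=\Delta+\delta$ from the two extremal degrees, then showing the set of vertices of degree $\Delta$ or $\delta$ is closed under adjacency and hence all of $V(G)$, with every edge joining the two classes) correctly yields the regular/semiregular bipartite characterization. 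The one delicate bookkeeping point---isolated vertices making $D$ singular and the displayed quantity undefined---you address adequately by noting they contribute only the eigenvalue $0$; for the equality clause $G$ is assumed connected, so the issue does not arise there.
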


Let $\alpha'(G)$ denote the matching number of $G$. The following lemma provides the maximum number of edges in a graph of order $n$ with given matching number.

\begin{lem}(Bollob\'{a}s \cite [Corollary 1.10]{BB})\label{lem-4}
If $n\geq 2\alpha+1$, then the maximum size of a graph $G$ of order $n$ with $\alpha'(G)=\alpha$ is
$$
\max\left\{\binom{2\alpha+1}{2}, \alpha n-\frac{(\alpha+1)\alpha}{2}\right\}.
$$
If  $n>(5\alpha+3)/2$ then $S_{n,\alpha}$ is the unique extremal graph; if  $n=(5\alpha+3)/2$ then there are two extremal graphs $K_{2\alpha+1}\cup (n-2\alpha-1)K_1$ and $S_{n,\alpha}$;  if $2\alpha+1\leq n <(5\alpha+3)/2$ then  $K_{2\alpha+1}\cup (n-2\alpha-1)K_1$ is the unique extremal graph.
\end{lem}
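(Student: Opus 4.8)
The plan is to prove both the edge bound and the characterization of extremal graphs by reducing to an extremal graph of a rigid ``join of cliques'' shape and then optimizing a one-parameter family. I work with graphs $G$ on $n$ vertices with $\alpha'(G)=\alpha$ (equivalently $\alpha'(G)\le\alpha$, since both candidate graphs realize matching number exactly $\alpha$ once $n\ge 2\alpha+1$), and I fix an \emph{edge-maximal} such graph $G^{*}$. By the Berge--Tutte formula $\alpha'(G^{*})=\frac12\bigl(n-\max_{T}(o(G^{*}-T)-|T|)\bigr)$, the hypothesis $\alpha'(G^{*})=\alpha$ forces a witness set $S$ with $o(G^{*}-S)-|S|=n-2\alpha$. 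Edge-maximality then pins down the shape: adding any edge inside $S$, any edge between $S$ and $V(G^{*})\setminus S$, or any edge inside a single component of $G^{*}-S$ leaves the vertex partition of $G^{*}-S$ into components unchanged, hence does not change $o(G^{*}-S)$, so $S$ still certifies deficiency $n-2\alpha$ and the matching number stays $\alpha$. By maximality every such edge is already present, which means $S$ is a clique completely joined to the rest and each component of $G^{*}-S$ is a clique. Thus $G^{*}=K_{a}\nabla\bigl(K_{d_{1}}\cup\cdots\cup K_{d_{t}}\bigr)$ with $a=|S|$ and $\sum_i d_i=n-a$.

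Next I pin down the parameters. Writing $p$ for the number of odd $d_i$, the witness $S$ gives $o(G^{*}-S)=p$, so $p-a=n-2\alpha$, i.e. the number of odd components is fixed. A convexity argument then controls the sizes: merging two even cliques, or an even clique into an odd one, keeps the parity count $p$ unchanged while strictly increasing $\sum_i\binom{d_i}{2}$, and among a fixed number $p$ of odd parts summing to $n-a$ the quantity $\sum_i\binom{d_i}{2}$ is strictly maximized by one clique of size $2\alpha-2a+1$ (automatically odd) together with $p-1$ isolated vertices. Hence the extremal graph is $K_{a}\nabla\bigl(K_{2\alpha-2a+1}\cup(n-2\alpha+a-1)K_{1}\bigr)$, leaving a single free parameter $a\in\{0,1,\dots,\alpha\}$.

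Finally I optimize over $a$. A direct count gives $e=\frac{3}{2}a^{2}+\bigl(n-4\alpha-\frac{3}{2}\bigr)a+2\alpha^{2}+\alpha$, a \emph{strictly convex} quadratic in $a$, so its maximum over $\{0,1,\dots,\alpha\}$ is attained only at an endpoint. At $a=0$ the value is $\binom{2\alpha+1}{2}$, realized by $K_{2\alpha+1}\cup(n-2\alpha-1)K_{1}$, and at $a=\alpha$ it is $\alpha n-\binom{\alpha+1}{2}$, realized by $S_{n,\alpha}$. Their difference equals $\alpha\bigl(n-\frac{5\alpha+3}{2}\bigr)$, which simultaneously identifies the maximum as the stated $\max\{\cdots\}$ and splits the three regimes at $n=(5\alpha+3)/2$; strict convexity rules out interior $a$, delivering the uniqueness claims in each case.

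I expect the main obstacle to be the structural reduction rather than the final optimization. One must argue carefully that edge-maximality together with a single Berge--Tutte witness $S$ forces the join-of-cliques form, and in particular that edges \emph{between} distinct components must be absent: adding such an edge merges two odd components into an even one, drops the deficiency, and raises the matching number, so these edges are genuinely excluded and the component structure—together with the parity count $p$—is fully controlled. Once this is secured, the convexity bookkeeping above is routine.
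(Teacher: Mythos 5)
The paper never proves this lemma: it is quoted from Bollob\'{a}s's book (Corollary 1.10 there), i.e.\ it is the classical Erd\H{o}s--Gallai matching theorem together with the characterization of extremal graphs, so there is no in-paper proof to compare against; what you have produced is a correct, self-contained proof along the standard Berge--Tutte lines. I verified the key computations: with $G^{*}=K_a\nabla(K_{d_1}\cup\cdots\cup K_{d_t})$, witness deficiency $p-a=n-2\alpha$, and the canonical graph $G_a=K_a\nabla\bigl(K_{2\alpha-2a+1}\cup(n-2\alpha+a-1)K_1\bigr)$, the edge count is indeed $e(G_a)=\tfrac32 a^2+\bigl(n-4\alpha-\tfrac32\bigr)a+2\alpha^2+\alpha$, strictly convex, with endpoint values $\binom{2\alpha+1}{2}$ and $\alpha n-\binom{\alpha+1}{2}$ and difference $\alpha\bigl(n-\tfrac{5\alpha+3}{2}\bigr)$, which yields exactly the three regimes and the uniqueness claims. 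Three points to tighten. (i) Your closing worry about edges between distinct components is vacuous: such edges are absent by definition of components, and your structural step only ever adds edges preserving the component partition of $G^{*}-S$, which is precisely what keeps the witness valid; the unjustified side claim that an odd--odd merge ``raises the matching number'' is never actually needed. (ii) ``Edge-maximal'' must be read as ``of maximum size'': merging an even part into another part is a genuine edge addition (so $\alpha'$ is squeezed between the old matching and the witness bound and stays $\alpha$), but the redistribution $(d_i,d_j)\mapsto(d_i+d_j-1,1)$ --- strict since $\binom{x+y-1}{2}-\binom{x}{2}-\binom{y}{2}=(x-1)(y-1)>0$ --- is \emph{not} an edge addition, so there you must compare edge counts with $G_a$ directly and check $\alpha'(G_a)=\alpha$ separately; this is where the hypothesis $n\ge 2\alpha+1$ enters, via the explicit $\alpha$-matching consisting of $a$ edges from the clique $K_a$ to isolated vertices plus $\alpha-a$ edges inside $K_{2\alpha-2a+1}$. (iii) The parameter range $a\in\{0,\dots,\alpha\}$ deserves its one-line justification: $p\le n-a$ gives $n-2\alpha+a\le n-a$, i.e.\ $a\le\alpha$. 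With these glosses the argument is complete and is, to my knowledge, essentially the textbook proof you would find behind the citation.
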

If $G$ is $kK_2$-free, then $\alpha'(G)\leq k-1$. According to  Lemma \ref{lem-4}, we obtain the following result immediately.
\begin{cor}\label{cor-1}
Let $k\geq 2$ be an integer. Then
$$
ex(n,kK_2)=\left\{
\begin{array}{ll}
(k-1)n-\frac{k(k-1)}{2},&\mbox{for}~n\geq (5k-2)/2;\\
\binom{2k-1}{2},&\mbox{for}~ 2k-1\leq n<(5k-2)/2.\\
\end{array}
\right.
$$
If   $n>(5k-2)/2$ then $Ex(n,kK_2)=\{S_{n,k-1}\}$; if  $n=(5k-2)/2$ then $Ex(n,kK_2)=\{K_{2k-1}\cup (n-2k+1)K_1, S_{n,k-1}\}$;  if $2k-1\leq n<(5k-2)/2$ then  $Ex(n,kK_2)=\{K_{2k-1}\cup (n-2k+1)K_1\}$.
\end{cor}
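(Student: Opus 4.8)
The plan is to derive this corollary directly from Lemma~\ref{lem-4} by exploiting the elementary equivalence between being $kK_2$-free and having bounded matching number. First I would record the observation already flagged in the text: a graph $G$ contains $kK_2$ as a subgraph precisely when it admits a matching of size $k$, so $G$ is $kK_2$-free if and only if $\alpha'(G)\le k-1$. Consequently,
$$ex(n,kK_2)=\max_{0\le \alpha\le k-1} f(n,\alpha),$$
where $f(n,\alpha)$ denotes the maximum size of a graph of order $n$ with $\alpha'(G)=\alpha$. Since $n\ge 2k-1\ge 2\alpha+1$ for every $\alpha\le k-1$, Lemma~\ref{lem-4} applies to each term and gives $f(n,\alpha)=\max\{\binom{2\alpha+1}{2},\,\alpha n-\tfrac{(\alpha+1)\alpha}{2}\}$.

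The next step is to show that this maximum is attained, and attained \emph{uniquely}, at $\alpha=k-1$; this is what lets me both read off the value of $ex(n,kK_2)$ and transfer the uniqueness statement of Lemma~\ref{lem-4}. I would verify that each of the two competing quantities is strictly increasing in $\alpha$ on the range $0\le \alpha\le k-1$: the difference of consecutive values of $\binom{2\alpha+1}{2}$ equals $4\alpha+3>0$, while the difference of consecutive values of $\alpha n-\tfrac{(\alpha+1)\alpha}{2}$ equals $n-(\alpha+1)$, which is positive because $\alpha+1\le k-1<2k-1\le n$. Hence $f(n,\alpha)$, being the pointwise maximum of two strictly increasing functions, is itself strictly increasing in $\alpha$, so the overall maximum is $f(n,k-1)$ and every extremal $kK_2$-free graph must have matching number exactly $k-1$.

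It then remains to substitute $\alpha=k-1$ into Lemma~\ref{lem-4}. This turns the two candidate values into $\binom{2k-1}{2}$ and $(k-1)n-\tfrac{k(k-1)}{2}$, the threshold $(5\alpha+3)/2$ into $(5k-2)/2$, and the parameter $2\alpha+1$ into $2k-1$; the extremal graphs $S_{n,\alpha}$ and $K_{2\alpha+1}\cup(n-2\alpha-1)K_1$ become $S_{n,k-1}$ and $K_{2k-1}\cup(n-2k+1)K_1$. Matching the three regimes ($n>(5k-2)/2$, $n=(5k-2)/2$, and $2k-1\le n<(5k-2)/2$) to the corresponding cases of Lemma~\ref{lem-4}, together with the uniqueness established above, yields the claimed formula and the description of $Ex(n,kK_2)$. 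The only point requiring genuine care is the monotonicity step: without it one could only conclude $ex(n,kK_2)\ge f(n,k-1)$, and the characterization of the extremal families would not follow, since a priori an extremal graph could have matching number strictly smaller than $k-1$.
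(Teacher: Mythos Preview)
Your proposal is correct and follows the same route as the paper, which simply states that the corollary follows immediately from Lemma~\ref{lem-4} together with the observation that $kK_2$-free is equivalent to $\alpha'(G)\le k-1$. Your explicit monotonicity argument (showing $f(n,\alpha)$ is strictly increasing in $\alpha$) is a useful elaboration that the paper leaves implicit, and it is precisely what is needed to transfer the extremal characterisation from Lemma~\ref{lem-4}.
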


\begin{lem}\label{lem-5}
For $n>k\geq 1$, we have
$$q_1(S_{n,k})=\frac{n+2k-2+\sqrt{(n+2k-2)^2-8k(k-1)}}{2}.$$
In particular, if $n\geq 2k^2-4k+3$, then
$$
 q_1(S_{n,k})  \geq n+2k-2-\frac{2k(k-1)}{n+2k-3}.
$$
\end{lem}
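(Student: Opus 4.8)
The plan is to reduce the eigenvalue computation to a $2\times 2$ matrix via the obvious equitable partition of $S_{n,k}$ and then to invoke Lemma~\ref{lem-1}. Write $S_{n,k}=K_k\nabla (n-k)K_1$ and partition its vertex set as $V_1\cup V_2$, where $V_1$ consists of the $k$ clique vertices and $V_2$ of the $n-k$ independent vertices. In $S_{n,k}$ every clique vertex has degree $n-1$ and every independent vertex has degree $k$, and this partition is equitable for $Q(S_{n,k})=D(G)+A(G)$: for a clique vertex, its diagonal entry $n-1$ together with the $k-1$ ones inside $V_1$ gives row sum $n+k-2$ in the $V_1$-block and $n-k$ in the $V_2$-block, while for an independent vertex one gets row sum $k$ in the $V_1$-block and $k$ in the $V_2$-block. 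Hence the quotient matrix is
$$B_\Pi=\begin{pmatrix} n+k-2 & n-k \\ k & k \end{pmatrix}.$$

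Since $S_{n,k}$ is connected for $n>k$, the matrix $Q(S_{n,k})$ is nonnegative and irreducible, so Lemma~\ref{lem-1} yields $q_1(S_{n,k})=\lambda_1(B_\Pi)$. It then remains to compute $\lambda_1(B_\Pi)$ as the larger root of the characteristic polynomial
$$\det(xI-B_\Pi)=x^2-(n+2k-2)x+2k(k-1),$$
which immediately produces the closed form asserted in the first part of the statement.

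For the second (asymptotic) inequality, I would set $a=n+2k-2$, so the target reads $q_1(S_{n,k})\geq a-\frac{2k(k-1)}{a-1}$. Substituting the closed form and clearing the factor $2$, this is equivalent to $\sqrt{a^2-8k(k-1)}\geq a-\frac{4k(k-1)}{a-1}$. Under the hypothesis both sides are positive, so squaring and simplifying collapses everything to the single inequality $a-1\geq 2k(k-1)$. Unwinding $a-1=n+2k-3$ shows this is exactly $n\geq 2k^2-4k+3$, precisely the stated bound; for $k=1$ the factor $2k(k-1)$ vanishes and equality holds trivially.

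The computation is entirely elementary, so no genuine obstacle is anticipated. The two points requiring care are, first, verifying that the partition is \emph{equitable} (so that Lemma~\ref{lem-1} delivers $q_1(S_{n,k})=\lambda_1(B_\Pi)$ rather than merely an upper bound), and second, checking the sign condition $a-\frac{4k(k-1)}{a-1}>0$ before squaring, so that the squaring step is reversible and the reduction to $a-1\geq 2k(k-1)$ is an equivalence.
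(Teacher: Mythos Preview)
Your proof is correct and follows exactly the same approach as the paper: compute the equitable quotient matrix $B_\Pi$ for the natural partition of $S_{n,k}$, invoke Lemma~\ref{lem-1} to identify $q_1(S_{n,k})$ with $\lambda_1(B_\Pi)$, and then solve the resulting quadratic; the paper dismisses the second inequality with ``by a simple calculation,'' and your substitution $a=n+2k-2$ together with squaring and reducing to $a-1\geq 2k(k-1)$ is precisely that calculation made explicit.
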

\begin{proof}
As $S_{n,k}=K_k\nabla(n-k)K_1$, we see that $Q(S_{n,k})$ has the equitable quotient matrix
$$
B_\Pi=\begin{pmatrix}
n+k-2 & n-k\\
k& k\\
\end{pmatrix}.
$$
By Lemma \ref{lem-1}, we have
$$q_1(S_{n,k})=\lambda_1(B_\Pi)=\frac{n+2k-2+\sqrt{(n+2k-2)^2-8k(k-1)}}{2},$$
as required.  By a simple calculation, the second part of the lemma follows immediately.
\end{proof}

Now we are in a position to give  the proof of Theorem \ref{thm-3}.

\renewcommand\proofname{\it Proof of Theorem \ref{thm-3}}
\begin{proof}
Assume that $G$ has the maximum signless Laplacian spectral radius among all $F_k$-free ($k\geq 2$) graphs of order $n$ ($n\geq 3k^2-k-2$). We claim that $G$ is connected, since otherwise we can add some new edges into $G$ so that the obtained graph $G'$ is connected and still $F_k$-free. However, the Rayleigh quotient and the Perron-Frobenius theorem implies that $q_1(G')>q_1(G)$, contrary to the maximality of $q_1(G)$.  Considering  that $G$ is a connected  $F_k$-free graph of order $n$ with the maximum signless Laplacian spectral radius, we obtain the following two facts.

\begin{fact}\label{fact-1}
For any $v \in V(G)$, $G[N(v)]$ is $kK_2$-free.
\end{fact}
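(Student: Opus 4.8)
The plan is to prove Fact~\ref{fact-1} by contradiction, exploiting directly the structure of $F_k$ as $k$ triangles sharing a single common vertex. Recall that $F_k=K_1\nabla kK_2$, so a copy of $F_k$ consists of one apex vertex joined to all $2k$ endpoints of a matching $kK_2$ on the remaining vertices. The key observation is that the neighborhood $N(v)$ is exactly the natural place to look for such a configuration, since every edge lying inside $N(v)$ closes up with $v$ into a triangle through $v$.

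Concretely, suppose toward a contradiction that $G[N(v)]$ contains a copy of $kK_2$ for some $v\in V(G)$; that is, there exist $2k$ pairwise distinct vertices $x_1,y_1,\ldots,x_k,y_k\in N(v)$ with $x_iy_i\in E(G)$ for every $i\in\{1,\ldots,k\}$. Since each $x_i$ and each $y_i$ lies in $N(v)$, the vertex $v$ is adjacent to all of them, and hence $\{v,x_i,y_i\}$ induces a triangle for each $i$. Because the $2k$ endpoints are pairwise distinct, these $k$ triangles pairwise share exactly the vertex $v$ and no other vertex. This is precisely a copy of $F_k=K_1\nabla kK_2$ inside $G$, with $v$ playing the role of the apex $K_1$ and the matching $\{x_iy_i\colon 1\leq i\leq k\}$ playing the role of $kK_2$.

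This contradicts the standing hypothesis that $G$ is $F_k$-free, and therefore no matching of size $k$ can exist in $G[N(v)]$; that is, $G[N(v)]$ is $kK_2$-free for every $v\in V(G)$. I do not anticipate any genuine obstacle here, as the statement is essentially a reformulation of the definition of $F_k$. The only point requiring (minor) care is to verify that the $2k$ matched vertices are mutually distinct, so that the resulting triangles meet only in $v$; but this is automatic, since by definition $kK_2$ is a matching supported on $2k$ distinct vertices.
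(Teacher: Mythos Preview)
Your proof is correct and is exactly the intended argument: a matching of size $k$ inside $N(v)$ together with $v$ yields a copy of $F_k=K_1\nabla kK_2$, contradicting $F_k$-freeness. The paper states this fact without proof, treating it as immediate from the definition of $F_k$, so your write-up simply spells out the obvious reasoning.
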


\begin{fact}\label{fact-2}
For any $uv\notin E(G)$,  $G+uv$ contains $F_k$ as a subgraph. Therefore, any two non-adjacent vertices of $G$ have at least one common neighbor, i.e., $V(G)=\{v\}\cup N(v)\cup N_2(v)$ (or $|N_2(v)|=n-1-d_v$) for each $v\in V(G)$.
\end{fact}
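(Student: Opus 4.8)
The plan is to prove the two assertions in turn, since the common-neighbor property follows from the saturation statement once we inspect where the added edge can sit inside a copy of $F_k$.

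First, for the saturation statement, I would argue exactly as in the connectivity step above. Suppose for contradiction that there is a non-edge $uv\notin E(G)$ for which $G+uv$ is still $F_k$-free. Since $G$ is connected, so is $G+uv$, and hence $Q(G+uv)$ is nonnegative and irreducible. Let $\mathbf{x}$ be the Perron eigenvector of $Q(G)$ associated with $q_1(G)$; by the Perron--Frobenius theorem all entries of $\mathbf{x}$ are positive. Using the identity $\mathbf{x}^{\top}Q(G)\mathbf{x}=\sum_{ij\in E(G)}(x_i+x_j)^2$, I obtain
$$
\mathbf{x}^{\top}Q(G+uv)\,\mathbf{x}=\mathbf{x}^{\top}Q(G)\,\mathbf{x}+(x_u+x_v)^2,
$$
and the Rayleigh quotient then gives
$$
q_1(G+uv)\geq \frac{\mathbf{x}^{\top}Q(G+uv)\,\mathbf{x}}{\mathbf{x}^{\top}\mathbf{x}}=q_1(G)+\frac{(x_u+x_v)^2}{\mathbf{x}^{\top}\mathbf{x}}>q_1(G),
$$
because $x_u,x_v>0$. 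Since $G+uv$ has the same order $n$, this contradicts the maximality of $q_1(G)$ among $F_k$-free graphs of order $n$. Hence $G+uv$ contains $F_k$ for every non-edge $uv$.

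Second, to deduce the common-neighbor property, I fix a non-edge $uv$ together with a copy of $F_k$ in $G+uv$. Since $G$ itself is $F_k$-free, this copy must use the new edge $uv$; otherwise it would already lie in $G$. Now recall that $F_k=K_1\nabla kK_2$ has a center $c$ joined to the $2k$ endpoints of $k$ disjoint edges $a_ib_i$, so every edge of $F_k$ is either a spoke $ca_i$ (equivalently $cb_i$) or a rim edge $a_ib_i$. I split into two cases according to the role of $uv$. If $uv$ is a rim edge, say $\{u,v\}=\{a_i,b_i\}$, then $c$ is adjacent in $F_k$ to both $u$ and $v$ via the spokes $cu,cv$, which differ from $uv$ and therefore lie in $G$; thus $c$ is a common neighbor of $u$ and $v$ in $G$. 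If instead $uv$ is a spoke, say $u=c$ and $v=a_i$, then the partner $b_i$ is adjacent to $c=u$ through $cb_i$ and to $v=a_i$ through the rim edge $a_ib_i$; both of these edges differ from $uv$ and hence lie in $G$, so $b_i$ is a common neighbor. In either case $u$ and $v$ have a common neighbor in $G$.

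Finally, applying this to an arbitrary vertex $v$: every vertex $u\neq v$ with $u\notin N(v)$ is non-adjacent to $v$ and therefore, by the above, shares a neighbor with $v$, placing it in $N_2(v)$. Consequently no vertex lies at distance greater than $2$ from $v$, so $V(G)=\{v\}\cup N(v)\cup N_2(v)$ with these three sets pairwise disjoint, giving $|N_2(v)|=n-1-d_v$. The only point requiring care --- and the closest thing to an obstacle --- is the observation that every copy of $F_k$ in $G+uv$ is forced to use the edge $uv$; once this is noted, the fixed structure of $F_k$ makes the two-case analysis entirely mechanical.
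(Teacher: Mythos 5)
Your proof is correct and is essentially the argument the paper intends: the paper states Fact \ref{fact-2} without proof, relying on the same Rayleigh-quotient/Perron--Frobenius edge-addition argument it invoked for connectivity, and your saturation step reproduces it exactly. Your two-case analysis of where $uv$ sits in the copy of $F_k$ simply makes explicit the underlying observation that every edge of $F_k$ lies in a triangle, so the third vertex of that triangle is a common neighbor of $u$ and $v$ in $G$.
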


Let $u\in V(G)$ be such that
$$d_u+ \frac{1}{d_u}\sum_{w\in N(u)}d_w=\max\Big\{d_v+ \frac{1}{d_v}\sum\limits_{w\in N(v)}d_w: v\in V(G)\Big\}.$$
Notice that $S_{n,k}=K_k\nabla(n-k)K_1$ is $F_k$-free. By Lemma \ref{lem-3} and Lemma \ref{lem-5}, we get
\begin{equation}\label{eq-1}
 \begin{aligned}
  n+2k-2-\frac{2k(k-1)}{n+2k-3}&\leq \rho_ Q(S_{n,k})\\
  &\leq q_1(G)\\
  &\leq d_u+ \frac{1}{d_u}\sum_{w\in N(v)}d_w\\
  &= d_u+ \frac{1}{d_u}[d_u+2e(G[N(u)])+e(N(u),N_2(u))].
  \end{aligned}
\end{equation}

We have the following two claims.

\begin{claim}\label{claim-1}
$d_u\geq (n+2k-3)/2$ and $2e(G[N(u)])+e(N(u),N_2(u))\in [2(k-1)d_u-2k(k-1)+d_u(n-1-d_u)+1,2(k-1)d_u-k(k-1)+d_u(n-1-d_u)]$.
\end{claim}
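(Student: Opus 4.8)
The plan is to prove both assertions by squeezing the integer
$$
S := 2e(G[N(u)]) + e(N(u),N_2(u))
$$
between a combinatorial upper bound (coming from Fact \ref{fact-1} and Corollary \ref{cor-1}) and the spectral lower bound supplied by the chain \eqref{eq-1}. Throughout I would abbreviate $C := d_u(n-1-d_u) + 2(k-1)d_u$, which is an integer, so that the target interval reads exactly $[\,C - 2k(k-1)+1,\ C - k(k-1)\,]$, and I would record the elementary identity $d_u + 1 + \tfrac{C}{d_u} = n + 2k - 2 - d_u + d_u = n+2k-3-d_u+d_u+1$, i.e. $\tfrac{C}{d_u} = n+2k-3-d_u$.

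First I would establish the upper endpoint and, as a consequence, the degree bound $d_u \ge (n+2k-3)/2$. By Fact \ref{fact-1} the graph $G[N(u)]$ on $d_u$ vertices is $kK_2$-free, so Corollary \ref{cor-1} (applied with ``$n$''$\,=d_u$) yields $e(G[N(u)]) \le (k-1)d_u - \tfrac{k(k-1)}{2}$, whence $2e(G[N(u)]) \le 2(k-1)d_u - k(k-1)$; meanwhile Fact \ref{fact-2} gives $|N_2(u)| = n-1-d_u$ and hence $e(N(u),N_2(u)) \le d_u(n-1-d_u)$. Adding these gives $S \le C - k(k-1)$, the right endpoint. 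Feeding this into \eqref{eq-1} and using $\tfrac{C}{d_u}=n+2k-3-d_u$ turns the whole chain into
$$
n + 2k - 2 - \frac{2k(k-1)}{n+2k-3} \ \le\ n + 2k - 2 - \frac{k(k-1)}{d_u},
$$
which rearranges to $\tfrac{1}{d_u}\le \tfrac{2}{n+2k-3}$, that is $d_u \ge (n+2k-3)/2$.

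For the left endpoint I would run \eqref{eq-1} in the opposite direction and invoke integrality. Since $q_1(G)\le d_u+1+\tfrac{S}{d_u}$ while $q_1(G)\ge n+2k-2-\tfrac{2k(k-1)}{n+2k-3}$, I obtain $S \ge C - \tfrac{2k(k-1)d_u}{n+2k-3}$. Because $k\ge 2$ forces $d_u\le n-1 < n+2k-3$, the subtracted term is \emph{strictly} less than $2k(k-1)$, so $S > C - 2k(k-1)$; as both $S$ and $C-2k(k-1)$ are integers, this sharpens to $S \ge C - 2k(k-1)+1$, as required.

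The main obstacle is that the strong form of Corollary \ref{cor-1} used above is only valid for $d_u\ge (5k-2)/2$, and this cannot be deduced from $d_u\ge (n+2k-3)/2$ without circularity. I would dispose of it in two stages. A crude estimate $e(G[N(u)])\le\binom{d_u}{2}$ gives $S\le d_u(n-2)$ and hence $q_1(G)\le n+d_u-1$; for $d_u\le 2k-2$ this is at most $n+2k-3$, contradicting the lower bound in \eqref{eq-1} once $n>2k^2-4k+3$ (which holds since $n\ge 3k^2-k-2$), so $d_u\ge 2k-1$. In the residual window $2k-1\le d_u\le (5k-2)/2$ the bound $e(G[N(u)])\le\binom{2k-1}{2}$ combined with \eqref{eq-1} forces, for large $n$, both $d_u=2k-1$ (so $G[N(u)]=K_{2k-1}$) and $e(N(u),N_2(u))$ essentially maximal; but then $N(u)$ induces a clique $K_{2k-1}$ almost completely joined to the large set $N_2(u)$, and taking any $w\in N(u)$ as a fan centre produces a $k$-matching inside $N(w)$ (pair up the clique $N(u)\setminus\{w\}$, using an edge into $N_2(u)$ for the last pair), i.e. a copy of $F_k$ — contradicting that $G$ is $F_k$-free. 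This structural elimination of the small-degree window is the delicate point; everything else is the bookkeeping above.
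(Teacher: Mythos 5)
Your overall strategy is the same as the paper's: squeeze $S=2e(G[N(u)])+e(N(u),N_2(u))$ between the spectral lower bound coming from \eqref{eq-1} and the combinatorial upper bound from Fact \ref{fact-1} and Corollary \ref{cor-1}, then clear the small-degree cases separately. Your derivation of $d_u\ge(n+2k-3)/2$ and of the upper endpoint (valid once $d_u\ge(5k-2)/2$), and your integrality argument for the lower endpoint, are all correct and essentially identical to the paper's (the paper phrases the last step contrapositively: $S\le 2(k-1)d_u-2k(k-1)+d_u(n-1-d_u)$ would force $d_u\ge n+2k-3>n-1$). The cases $d_u\le 2k-2$ and $2k\le d_u<(5k-2)/2$ are also handled exactly as in the paper.

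The gap is in the boundary case $d_u=2k-1$, which you rightly flag as the delicate point but do not close. First, \eqref{eq-1} does \emph{not} force $G[N(u)]=K_{2k-1}$: with $d_u=2k-1$ it only gives $S>(2k-1)(n-2)-2k(k-1)$, and since the whole deficit $2k(k-1)$ may be charged to $e(G[N(u)])$ rather than to $e(N(u),N_2(u))$, all that follows is $e(G[N(u)])>(k-1)^2$. For $k\ge 3$ this is compatible with $G[N(u)]$ being a near-$(k-1)$-regular graph on $2k-1$ vertices, which is $kK_2$-free, has roughly $(2k-1)(k-1)/2>(k-1)^2$ edges, and is far from complete. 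Second, even granting the clique, the construction ``pair up the clique $N(u)\setminus\{w\}$, using an edge into $N_2(u)$ for the last pair'' is not a $k$-matching in $N(w)$: pairing all of $N(u)\setminus\{w\}$ already spends its $2k-2$ vertices on $k-1$ edges, and the $k$-th edge would have to lie inside $\{u\}\cup(N(w)\cap N_2(u))$, where no edge is guaranteed ($u$ has no neighbour in $N_2(u)$). The paper's treatment of this case is genuinely different and is needed: a counting argument first shows that all but $O(k)$ vertices of $N_2(u)$ are joined to \emph{all} of $N(u)$ (otherwise $S$ is already too small for \eqref{eq-1}), so $\{u\}\cup N_2(u)$ contains $k$ vertices fully joined to $N(u)$; then $F_k$-freeness forces every vertex of $N(u)$ to have degree at most $k-1$ in $G[N(u)]$ (a vertex $v$ with $k$ neighbours inside $N(u)$ would yield a $k$-matching in $N(v)$ by matching those neighbours to the $k$ fully joined vertices); and finally this degree bound gives $q_1(G)\le n+k-1$, contradicting \eqref{eq-1}. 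Some such two-branch argument (high-degree vertex in $G[N(u)]$ producing $F_k$, versus all degrees $\le k-1$ producing a spectral contradiction) is required to finish your proof.
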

\renewcommand\proofname{\it Proof}
\begin{proof}
If $d_u\leq 2k-2$, then
$$2e(G[N(u)])+e(N(u),N_2(u))\leq 2\binom{d_u}{2}+d_u(n-1-d_u)=(n-2)d_u.$$
Since $n\geq 3k^2-k-2$ and $k\geq 2$, from the above inequality and (\ref{eq-1})  we can deduce that 
$$
d_u\geq 2k-1-\frac{2k(k-1)}{n+2k-3}>2k-2,
$$
a contradiction.

If $d_u= 2k-1$,  we assert that there are at least $n-\lfloor10k/3\rfloor$ vertices in $N_2(u)$  adjacent to all vertices of $N(u)$, since otherwise we have 
\begin{equation*}
\begin{aligned}
         2e(G[N(u)])+e(N(u),N_2(u))&\leq 2\binom{d_u}{2}+d_u(n-1-d_u)\\
         &~~~-\Big[n-1-d_u-\Big(n-\Big\lfloor\frac{10}{3}k\Big\rfloor-1\Big)\Big] \\
          &=(n-2)(2k-1)-\Big\lfloor\frac{4}{3}k\Big\rfloor-1\\
          &\leq (n-2)(2k-1)-\frac{4}{3}k
 \end{aligned}
 \end{equation*}
which leads to 
$$
n\leq 3k^2-\frac{13}{2}k+\frac{9}{2}
$$ 
by (\ref{eq-1}), contrary to $n\geq 3k^2-k-2$. Notice that $n-\lfloor10k/3\rfloor\geq k-1$ due to $n\geq 3k^2-k-2$ and $k\geq 2$. Thus there are at least $k$ vertices in $\{u\}\cup N_2(u)$ adjacent to all vertices of $N(u)$.  Since $G$ is $F_k$-free, we see that each vertex of $N(u)$ has degree at most $k-1$ in $G[N(u)]$, which gives that
$$
2e(G[N(u)])+e(N(u),N_2(u))\leq (k-1)d_u+d_u(n-1-d_u). 
$$
Combining this with  (\ref{eq-1}) yields that $n\leq 3$, which is impossible because $n\geq 3k^2-k-2\geq 8$. 

If $2k\leq d_u<(5k-2)/2$, by Fact \ref{fact-1} and  Corollary \ref{cor-1}, we obtain
$$
2e(G[N(u)])+e(N(u),N_2(u)) \leq 2\binom{2k-1}{2}+d_u(n-1-d_u),
$$
which gives that 
$$
d_u\leq 2k-1+\frac{k(2k-1)}{n+k-3}<2k
$$
by (\ref{eq-1}) and the fact that $n\geq 3k^2-k-2$, a contradiction.

If $d_u\geq (5k-2)/2$, again by Fact \ref{fact-1} and  Corollary \ref{cor-1}, we have 
\begin{equation*}
2e(G[N(u)])+e(N(u),N_2(u)) \leq 2(k-1)d_u-k(k-1)+d_u(n-1-d_u).
\end{equation*}
Again by  (\ref{eq-1}),  we obtain  $d_u\geq (n+2k-3)/2$, as required. Furthermore, if 
$2e(G[N(u)])+e(N(u),N_2(u)) \leq 2(k-1)d_u-2k(k-1)+d_u(n-1-d_u)$,
then we can deduce that
$d_u\geq n+2k-3$, contrary to $d_u\leq n-1$. Therefore, we conclude that 
$2e(G[N(u)])+e(N(u),N_2(u))\in [2(k-1)d_u-2k(k-1)+d_u(n-1-d_u)+1, 2(k-1)d_u-k(k-1)+d_u(n-1-d_u)]$.

This proves Claim \ref{claim-1}.
\end{proof}

\begin{claim}\label{claim-2}
$G[N(u)]$ is a spanning subgraph of $S_{d_u,k-1}$.
\end{claim}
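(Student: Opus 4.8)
The plan is to translate \textbf{Claim \ref{claim-2}} into a statement about vertex covers and then prove it by combining the edge count coming from \textbf{Claim \ref{claim-1}} with the structure of a maximum matching. Write $H=G[N(u)]$ and $d=d_u$. First I would record the key reformulation: a graph on $d$ vertices is a spanning subgraph of $S_{d,k-1}$ \emph{if and only if} it has a vertex cover of size at most $k-1$ (the clique part of $S_{d,k-1}$ is precisely such a cover, and conversely a cover $C$ of size $k-1$ makes $N(u)\setminus C$ independent, so every edge lies inside $C$ or between $C$ and the independent part). Thus the goal becomes $\tau(H)\le k-1$. Next I would extract a clean lower bound on $e(H)$: since $|N_2(u)|=n-1-d$ gives $e(N(u),N_2(u))\le d(n-1-d)$, subtracting this from the lower estimate in \textbf{Claim \ref{claim-1}} yields $2e(H)\ge 2(k-1)d-2k(k-1)+1$, hence $e(H)\ge (k-1)d-k(k-1)+1$ by integrality.

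Now I would fix a maximum matching $M=\{x_iy_i:1\le i\le \nu\}$ of $H$, where $\nu=\alpha'(H)\le k-1$ by \textbf{Fact \ref{fact-1}}, and set $U=N(u)\setminus V(M)$, which is independent by maximality. The engine is the usual augmenting-path observation: for each matching edge $x_iy_i$, if one endpoint has at least two neighbours in $U$, then the other endpoint has none (otherwise one obtains a length-$3$ augmenting path between two distinct vertices of $U$, contradicting maximality of $M$). Call the edge $x_iy_i$ \emph{heavy} if some endpoint $p_i$ has at least two neighbours in $U$, and let $q_i$ be its partner (which then has no neighbour in $U$); let $t$ be the number of heavy edges.

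The decisive step is an edge count. Because $U$ is independent,
$$e(H)=e(U,V(M))+e(H[V(M)])\le t\,|U|+2(\nu-t)+\binom{2\nu}{2},$$
where heavy edges contribute at most $|U|$ each and non-heavy edges at most $2$. Comparing this with the lower bound $e(H)\ge (k-1)d-k(k-1)+1$ and using that $d=d_u\ge (n+2k-3)/2$ is large (here $n\ge 3k^2-k-2$ forces $d>k^2$), I would show that $t\le k-2$ is impossible, since then the right-hand side is at most $(k-2)d+O(k)$, which falls below the lower bound. Hence $t=\nu=k-1$: every matching edge is heavy, each $p_i$ has at least two neighbours in $U$, and each $q_i$ has none. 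Finally I would check that $Q=\{q_1,\dots,q_{k-1}\}$ is independent: an edge $q_iq_j$ together with distinct $U$-neighbours $a_i\in N(p_i)$, $a_j\in N(p_j)$ produces the enlarged matching $\{q_iq_j,\,p_ia_i,\,p_ja_j\}$ replacing $x_iy_i,x_jy_j$, again contradicting maximality. Since $U$ is independent, each $q_i$ has no neighbour in $U$, and $Q$ is independent, every edge of $H$ meets $P=\{p_1,\dots,p_{k-1}\}$, so $P$ is a vertex cover of size $k-1$ and $H$ is a spanning subgraph of $S_{d_u,k-1}$.

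I expect the edge-count comparison to be the main obstacle: it is genuinely tight and is exactly where the hypothesis $n\ge 3k^2-k-2$ is consumed. The delicate point is that a $kK_2$-free graph may have many edges yet a large vertex cover (the $K_{2k-1}$-plus-isolated-vertices configuration), so it is essential that the \emph{linear-in-$d$} term of $e(H)$ forces the number of heavy edges up to the maximum $k-1$; only then does the matching structure collapse to the desired cover. Everything else (the augmenting-path dichotomy and the independence of $Q$) is routine once $t=\nu=k-1$ is established.
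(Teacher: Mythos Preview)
Your proof is correct and follows essentially the same approach as the paper's: fix a maximum matching of $H=G[N(u)]$, use an edge-count comparison (against the lower bound extracted from Claim~\ref{claim-1}) to force every matching edge into the ``one endpoint sees $U$, the other does not'' configuration, and then show the non-covering endpoints are independent. The only notable differences are cosmetic: you frame the goal as $\tau(H)\le k-1$ and dispose of the independence of $Q$ via a direct augmenting-matching argument, whereas the paper proves the independence of the corresponding set $S$ by a second edge count.
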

\begin{proof}
First we assert that $G[N(u)]$ contains  a $(k-1)$-matching $M$, since otherwise we have 
$$
2e(G[N(u)])+e(N(u),N_2(u))\leq 2(k-2)d_u-(k-1)(k-2)+d_u(n-1-d_u)
$$
by Corollary \ref{cor-1} (notice that $d_u\geq (n+2k-3)/2> (5(k-1)-2)/2$), which contradicts Claim \ref{claim-1}. Denote by $M=\{r_is_i:1\leq i\leq k-1\}$. Let $R=\{r_1,\ldots,r_{k-1}\}$, $S=\{s_1,\ldots,s_{k-1}\}$ and $T=N(u)\setminus (R\cup S)$. Since  $G[N(u)]$ is $kK_2$-free by Fact \ref{fact-1}, we see that $T$ must be an independent set. For the same reason, we claim that for each $1\leq i\leq k-1$, at least one of $N(r_i)\cap T$ and $N(s_i)\cap T$ is empty, or $N(r_i)\cap T=N(s_i)\cap T=\{t_i\}$ for some $t_i\in T$. 
Let $p$ denote the number of $i\in[k-1]$ such that $N(r_i)\cap T=\emptyset$ or $N(s_i)\cap T=\emptyset$. If $p\leq k-2$, then 
$$
\begin{aligned}
e(G[N(u)])&\leq \binom{2(k-1)}{2}+(d_u-2(k-1))p+2(k-1-p)\\
&= (d_u-2k)p+(k-1)(2k-1)\\
&\leq  (d_u-2k)(k-2)+(k-1)(2k-1)\\
&= (k-2)d_u +k+1,
\end{aligned}
$$
and therefore,
$$
2e(G[N(u)])+e(N(u),N_2(u))\leq 2(k-2)d_u+2(k+1)+d_u(n-1-d_u).
$$
Combining this with Claim \ref{claim-1}, we obtain $d_u\leq k^2$, which is impossible because $d_u\geq (n+2k-3)/2>k^2$ due to $n\geq 3k^2-k-2$ and $k\geq 2$. Thus we must have $p=k-1$, that is, 
$N(r_i)\cap T=\emptyset$ or $N(s_i)\cap T=\emptyset$ for each $1\leq i\leq k-1$. Without loss of generality, we may assume that $N(s_i)\cap T=\emptyset$ for all $1\leq i\leq k-1$, or equivalently, there are no edges between $S$ and $T$. In what follows, we shall see that  $S$ is also an independent set. In fact, if there exists some edge $s_is_j$ ($i\neq j$) in $G[S]$, as above, we see that
at least one of $N(r_i)\cap T$ and $N(r_j)\cap T$ is empty, or $N(r_i)\cap T=N(r_j)\cap T=\{t\}$ for some $t\in T$. This implies that $e(\{r_i,r_j\},T)\leq \max\{2,d_u-2(k-1)\}=d_u-2(k-1)$, and so
$$
\begin{aligned}
e(G[N(u)])&\leq \binom{2(k-1)}{2}+d_u-2(k-1)+(d_u-2(k-1))(k-3)\\
&= (k-2)d_u +k-1,
\end{aligned}
$$
which is  impossible by above arguments. Concluding these results, we obtain that $S\cup T$ is an independent set. Since $|R|=|S|=k-1$ and $|T|=d_u-2(k-1)$, we see that $G[N(u)]$ is exactly a spanning subgraph of $S_{d_u,k-1}$.
\end{proof}

Let  $R=\{r_1,\ldots,r_{k-1}\}$, $S=\{s_1,\ldots,s_{k-1}\}$ and $T$ are defined as in Claim \ref{claim-2}. We have $N(u)=R\cup S\cup T$. Let $X=\{u\}\cup R\cup N_2(u)$ and $Y=S\cup T$. Notice that $Y$ is an independent set. We assert that there are at most $k(k-1)-1$ vertices of $Y$  not adjacent to all vertices of $X\setminus\{u\}$, since otherwise we can deduce from Claim \ref{claim-2} that  
$$
\begin{aligned}
2e(G[N(u)])+e(N(u),N_2(u))&\leq 2e(S_{d_u,k-1})+d_u(n-1-d_u)-k(k-1)\\
&=2(k-1)d_u-2k(k-1)+d_u(n-1-d_u),
\end{aligned}
$$
which is impossible by Claim \ref{claim-1}. As $Y\subseteq N(u)$ and $|Y|=d_u-k+1$, the number of vertices in $Y$ that are adjacent to all vertices of $X$ is at least 
$$|Y|-(k(k-1)-1)=d_u-k+1-(k(k-1)-1)=d_u-k^2+2\geq k,$$
where the last inequality follows from $d_u\geq (n+2k-3)/2\geq (3k^2+k-5)/2$ and $k\geq 2$. Since $G$ is $F_k$-free, we may conclude that each vertex of $X$ has degree at most $k-1$ in $G[X]$.   Let $G^*=G[X]\nabla G[Y]=G[X]\nabla(d_u-k+1)K_1$. Then $G$ is a spanning subgraph of $G^*$. Take 
$$
Q^*=Q(G^*)+
\begin{bmatrix}
\mathrm{diag}(2(k-1-d_x^*):x\in X) & 0\\
0 & 0\\
\end{bmatrix}
\begin{matrix}
X\\
Y
\end{matrix}
,
$$
where $d_x^*$ denotes the degree of $x\in X$ in $G[X]$. Observe that $Q^*$ has the equitable quotient matrix 
$$
B_\Pi=
\begin{bmatrix}
d_u+k-1 & d_u-k+1\\
n-d_u+k-1 & n-d_u+k-1\\
\end{bmatrix}
\begin{matrix}
X\\
Y
\end{matrix}.
$$
Then, by Lemma \ref{lem-1} and Lemma \ref{lem-2}, we obtain 
\begin{equation}\label{eq-3}
\begin{aligned}
q_1(G)&\leq q_1(G^*)\leq \lambda_1(Q^*)=\lambda_1(B_\Pi)\\
&=\frac{n+2k-2+\sqrt{(n+2k-2)^2+8(k-1)(d_u-n-k+1)}}{2}.
\end{aligned}
\end{equation}
On the other hand, from Lemma \ref{lem-5} we have 
\begin{equation}\label{eq-4}
q_1(G)\geq q_1(S_{n,k})=\frac{n+2k-2+\sqrt{(n+2k-2)^2-8k(k-1)}}{2}.
\end{equation}
Combining (\ref{eq-3}) and (\ref{eq-4}), we  can deduce that $d_u\geq n-1$. Then we have  $d_u=n-1$, and so $G$ is a spanning subgraph of $K_1\nabla S_{n-1,k}=S_{n,k}$ by Claim \ref{claim-2}. Therefore, we must have $G=S_{n,k}$ by the maximality of $q_1(G)$. 

We complete the proof.
\end{proof}

\section*{Acknowledgements}
X. Huang is partially supported by the National Natural Science Foundation of China (Grant No. 11901540 and Grant No. 11671344).

\end{document}